\newtheorem{theorem}{Theorem}[section]
\newtheorem{proposition}{Proposition}[section]
\numberwithin{equation}{section}
\newcommand{\al}{\alpha}
 \newcommand{\De}{\Delta}
\newcommand{\la}{\lambda}
\newcommand{\g}{\gamma}
\newcommand{\de}{\delta}
\newcommand{\G}{\Gamma}
\newcommand{\Om}{\Omega}
\newcommand{\ph}{\phi}
\newcommand{\phh}{\varphi}
\newcommand{\ka}{\kappa}
\newcommand{\CC}{{\mathbb C}}
\newcommand{\RR}{{\mathbb R}}
\newcommand{\DD}{{\mathbb D}}
\newcommand{\DDk}{{\mathbb D_{(k)}}}
\newcommand{\NN}{{\mathbb N}}
\newcommand{\CMF}{\mathcal{CMF}}
\newcommand{\BF}{\mathcal{BF}}
\newcommand{\SF}{\mathcal{SF}}
\newcommand{\CBF}{\mathcal{CBF}}
\begin{document}

\title[Estimates for a general fractional relaxation equation]
{Estimates for a general fractional relaxation equation and application to an inverse source problem}

\author{Emilia Bazhlekova}
\address{Institute of Mathematics and
Informatics, Bulgarian Academy of Sciences, Acad. G. Bonchev Str.,
Bl. 8, Sofia 1113, Bulgaria}
\email{e.bazhlekova@math.bas.bg}

\date{ \today}

\begin{abstract}
A general fractional relaxation equation is considered 
with a convolutional derivative in time 
introduced by A. Kochubei (Integr. Equ. Oper. Theory 71 (2011), 583--600).
This equation generalizes the single-term, multi-term and distributed-order fractional relaxation equations. The fundamental and the impulse-response solutions are studied in detail. Properties such as analyticity and subordination identities are established and employed in the proof of an upper and a lower bound. The obtained results extend some known properties of the Mittag-Leffler functions.
As an application of the estimates, uniqueness and conditional stability are established for an inverse source problem for the general time-fractional diffusion equation on a bounded domain.

{\bf Keywords}: general fractional derivative, subordination principle, completely monotone function, Mittag-Leffler function, inverse source problem
\end{abstract}

\maketitle

\section {Introduction}

A generalization of the Caputo fractional derivative is introduced in \cite{KGeneral} in the form
\begin{equation}\label{DDk}
(\DDk f)(t)=\frac{d}{d t}\int_0^t k(t-\tau)f(\tau)\,d\tau- k(t)f(0),\ \ t>0,
\end{equation}
where $k\in L^1_{loc}(\RR_+)$ is a nonnegative function. 
The Caputo fractional derivative of order $\al\in(0,1)$ is recovered for $k(t)=\frac{t^{-\alpha}}{\Gamma(1-\alpha)}$, where $\G(.)$ is the Euler Gamma function. If $k(t)=\int_0^1\frac{t^{-\alpha}}{\Gamma(1-\alpha)}\mu(\al)\, d\al$ with $\mu$ being a nonnegative weight function, then $\DDk$ is the distributed-order fractional derivative. An important subclass of weight functions is $\mu(\al)=\sum_{j=1}^m c_j\de(\al-\al_j)$, where $0< \al_j<1$, $c_j>0$, and $\de(.)$ is the Dirac delta function.  In this case $\DDk$ is a linear combination with positive coefficients of Caputo fractional derivatives of orders $\al_j$. 

The general $\DD_{(k)}$-relaxation equation 
 provides relaxation patterns different from
 the exponential relaxation to equilibrium. Such patterns can be caused by inhomogeneities, which distort the exponential
relaxation curve. 
In particular, power-law
relaxation is governed by single and multi-term fractional relaxation equations, while logarithmic relaxation patterns are related to time-derivatives continuously distributed over the interval $[0,1]$. In the context of stochastic processes, general relaxation patterns are studied in \cite{Mainardi, MM, Toaldo}. 

In \cite{Trifce} the general time-fractional diffusion equation with the integro-differential operator $\DD_{(k)}$ acting in time is discussed and its relevance for describing a broad class of anomalous nonscalling patterns is pointed out. The Cauchy problem on unbounded space domain for this generalized diffusion equation is studied in \cite{KGeneral}. In \cite{LuchkoGeneral} some uniqueness and existence results, as well as a maximum principle,  are established for the initial-boundary-value problem. The particular cases of multi-term and distributed-order fractional diffusion equations are studied extensively in the last years in e.g. \cite{A, Burrage2012, Luchkomultiterm, Yamultiterm, K, JLSZ, LuchkoCAMWA, Mijena, ITSF} to mention only few of many recent publications.

Consider the $\DD_{(k)}$-relaxation equation 
\begin{equation}\label{rel}
(\DD_{(k)}u)(t)+\la u(t)=f(t),\ \ \ \ t>0;\ \ \ u(0)=a,
\end{equation}
where $\la\in\RR_+, a\in\RR$. The solution of eq. (\ref{rel}) is represented in the form
\begin{equation}\label{sol}
u(t)=a u(t;\la)+\int_0^t v(t-\tau;\la) f(\tau)\, d\tau,
\end{equation} 
where $u(t;\la)$ is the fundamental solution and $v(t;\la)$ is the impulse-response solution. 
In the particular case of single-term Caputo fractional relaxation equation it is known that the solutions are expressed in terms of Mittag-Leffler functions as follows: $u(t;\la)=E_\al(-\la t^\al)$ and $v(t;\la)=t^{\al-1}E_{\al,\al}(-\la t^\al)$.  Essential feature of these functions is that they are infinitely differentiable on $(0,\infty)$ and completely monotone, i.e. 
\begin{equation}\label{cmfd}
(-1)^n f^{(n)}(t)\ge 0, \mbox{\ for\ all\ } n=0,1,..., t> 0.
\end{equation}
For the theory of functions of Mittag-Leffler type see e.g. \cite{ML, KST, Bessel}. For definitions and properties of completely monotone functions and related classes of functions we refer to \cite{CMF}. 

The following property of Mittag-Leffler functions turns out to be useful in the study of single-term time-fractional diffusion equations, based on eigenfunction expansion: for any $\la$ satisfying $\la\ge\la_1>0$ and any $T>0$ there exists a constant $C>0$, which does not depend on $\la$ (but may depend on $\al, T, \la_1$), such that 
\begin{equation}\label{est0}
{C}\le\la\int_0^T t^{\al-1}E_{\al,\al}(-\la t^\al)\,dt<1,\ \ \ 0<\al\le 1.
\end{equation}
The bound from above is useful in the study of regularity of inhomogeneous equations, see  \cite{SaYa}, and the estimate from below in the study of inverse source problems with final overdetermination. It is used e.g. in \cite{SaYaInv, WeiWang2014, ZhangXu2011, ZhangWei2013} for the proof of uniqueness and conditional stability of such inverse problems. 
 
Concerning the general relaxation equation (\ref{rel}), it is established in \cite{KGeneral} that under some additional conditions on the kernel $k(t)$ 
the fundamental solution is continuous on $[0,\infty)$, infinitely differentiable and completely monotone on $(0,\infty)$. 

The main aim of this work is to generalize estimates (\ref{est0}) to the case of the $\DDk$-relaxation equation (\ref{rel}). The proof consists of two main steps: establishing a subordination identity of the form 
\begin{equation}\label{sub}
u(t;\la)=\int_0^\infty \phi(t,\tau)e^{-\la \tau}\, {d}\tau, \ \ t>0,
\end{equation}
where $\phi(t,\tau)$ is a probability density function, 
and the proof of analyticity of $u(t;\la)$ in $t>0$. As usual, the proofs are based on the Laplace transform technique.
As an application of the obtained estimates, uniqueness and a conditional stability result are established for an inverse source problem for the general time-fractional diffusion equation on a bounded domain.

The paper is organized as follows. In Section 2. the fundamental and the impulse-response solutions of the $\DDk$-relaxation equation are studied in detail. The results are applied in Section 3. to study a direct and an inverse source problem for the $\DDk$-diffusion equation on a bounded space domain. Definitions and some useful properties of special classes of functions related to complete monotonicity are given in an Appendix.

\section{$\DDk$-relaxation equation}

Following \cite{KGeneral}, in this work we assume that the Laplace transform $\widehat{k}(s)$ of the kernel $k(t)$ exists for all $s>0$, $\widehat{k}(s)\in\SF$ - the class of Stieltjes functions (for the definition see Appendix) and 
\begin{equation}\label{k}
\widehat{k}(s)\to 0, s\widehat{k}(s)\to \infty \ \mathrm{as} \ s\to \infty;\ \  
\widehat{k}(s)\to \infty, s\widehat{k}(s)\to 0 \ \mathrm{as} \ s\to 0. 
 \end{equation} 
The above assumptions on $\widehat{k}(s)$
imply $k(t)\in\CMF$, see \cite{KGeneral}.

The unique solvability of the $\DDk$- relaxation equation is proved in \cite{KGeneral}. Here we establish some additional representations and properties of the fundamental and impulse-response solutions.

By applying Laplace transform to equation (\ref{rel}), we obtain the following representations 
in Laplace domain 
\begin{equation}\label{Lsol}
\widehat{u}(s;\la)=\frac{g(s)}{s(g(s)+\la)},\ \ \widehat{v}(s;\la)=\frac{1}{g(s)+\la},\ \ g(s):=s\widehat{k}(s).
\end{equation}
The assumptions on the kernel $k(t)$ are equivalent to the following assumptions on the function $g(s)$
\begin{equation}\label{g}
g(s)\in \CBF; \ \ \frac{g(s)}{s}\to 0, g(s)\to \infty \ \mathrm{as} \ s\to \infty;\ \  
\frac{g(s)}{s}\to \infty, g(s)\to 0 \ \mathrm{as} \ s\to 0, 
\end{equation} 
where $\CBF$ denotes the class of complete Bernstein functions, see Appendix.

First we prove subordination relations of the form (\ref{sub}) for $u(t;\la)$ and $v(t;\la)$.

\begin{theorem}
The fundamental solution $u(t;\la)$ and the impulse-response solution $v(t;\la)$ of problem (\ref{rel}) satisfy the subordination identities  
\begin{equation}\label{sub1}
u(t;\la)=\int_0^\infty \phi(t,\tau)e^{-\la \tau}\, {d}\tau, \ \ t>0,
\end{equation} 
\begin{equation}\label{sub2}
v(t;\la)=\int_0^\infty \psi(t,\tau)e^{-\la \tau}\, {d}\tau, \ \ t>0,
\end{equation}
where the functions $\ph(t,\tau)$ and $\psi(t,\tau)$ 
 obey the properties 
\begin{equation}\label{12}
\ph(t,\tau)\ge 0,\ \psi(t,\tau)\ge 0;\ \ \ \int_0^\infty \ph(t,\tau)\, d\tau=1,\ \int_0^\infty \psi(t,\tau)\, dt=1.
\end{equation}
 \end{theorem}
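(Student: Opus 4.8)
The plan is to identify the subordination kernels $\ph(t,\tau)$ and $\psi(t,\tau)$ through their double Laplace transforms and then verify nonnegativity and the normalization conditions by exploiting the structure of the functions in \eqref{g}. Writing $\widehat{\ph}(s,\tau)$ for the Laplace transform of $\ph(t,\tau)$ in the variable $t$, the identity \eqref{sub1} together with \eqref{Lsol} forces
\[
\int_0^\infty \widehat{\ph}(s,\tau)\, e^{-\la\tau}\, d\tau = \frac{g(s)}{s(g(s)+\la)}.
\]
Since $g(s)+\la$ appears in the denominator, the natural guess is $\widehat{\ph}(s,\tau) = \frac{g(s)}{s} e^{-\tau g(s)}$, because $\int_0^\infty e^{-\tau g(s)} e^{-\la\tau}\, d\tau = (g(s)+\la)^{-1}$. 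Similarly for $\psi$ the identity \eqref{sub2} and \eqref{Lsol} suggest $\widehat{\psi}(s,\tau) = -\frac{\partial}{\partial\tau}e^{-\tau g(s)} = g(s)e^{-\tau g(s)}$, which indeed integrates against $e^{-\la\tau}$ to $1/(g(s)+\la)$. Note $\widehat{\psi}(s,\tau) = s\widehat{\ph}(s,\tau)$, which is consistent with $v(t;\la)$ being (up to the initial term) the time-derivative of $u(t;\la)$; one should record the precise relation between $\ph$ and $\psi$.

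The first substantive step is to show that $\tau\mapsto e^{-\tau g(s)}$, as a function of $s>0$ for each fixed $\tau>0$, is the Laplace transform of a nonnegative function $\psi(t,\tau)$ of $t$. This is where the hypothesis $g\in\CBF$ enters decisively: a complete Bernstein function is in particular a Bernstein function, so $e^{-\tau g(s)}$ is completely monotone in $s$ for each $\tau\ge 0$ (composition of the completely monotone $e^{-\tau x}$ with the Bernstein function $g$), hence by Bernstein's theorem it is the Laplace transform of a nonnegative measure; the conditions \eqref{g} at $0$ and $\infty$ (together with $g$ being nonconstant) should be used to rule out atoms and to guarantee that this measure has a density $\psi(\cdot,\tau)\in L^1$. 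For $\ph$ one then uses $\widehat{\ph}(s,\tau) = \frac{1}{s}\widehat{\psi}(s,\tau)$, so that $\ph(t,\tau) = \int_0^t \psi(r,\tau)\, dr$, which is automatically nonnegative. Interchanging the order of integration in \eqref{sub1}–\eqref{sub2}, justified by Tonelli's theorem since everything is nonnegative, then yields the subordination identities in the Laplace domain and hence, by uniqueness of the Laplace transform, in the time domain.

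It remains to verify the normalizations in \eqref{12}. For $\int_0^\infty \psi(t,\tau)\, dt = 1$: integrating $\widehat{\psi}(s,\tau) = g(s)e^{-\tau g(s)}$ over $\tau\in(0,\infty)$ gives $\int_0^\infty \widehat{\psi}(s,\tau)\, d\tau = 1$ for every $s>0$, i.e. the Laplace transform of $\tau\mapsto\int_0^\infty\psi(t,\tau)\,dt$ — wait, more carefully: $\int_0^\infty \psi(t,\tau)\,dt = \lim_{s\to 0^+}\widehat{\psi}(s,\tau)$, and since $g(s)\to 0$ as $s\to 0$ by \eqref{g}, we get $\widehat{\psi}(s,\tau)\to 0$ pointwise, which is the wrong normalization — so instead one integrates in $\tau$ first: $\int_0^\infty \widehat\psi(s,\tau)\,d\tau = 1$, and inverting, $\int_0^\infty\big(\int_0^\infty\psi(t,\tau)\,d\tau\big)e^{-st}\,dt$ has Laplace transform $1/s$ after one more integration — the clean route is: the function $\tau\mapsto\int_0^\infty \psi(t,\tau)\,dt$ is, by Fubini, the inverse Laplace transform (in a formal sense) evaluated by $\int_0^\infty \psi(t,\tau)\,dt = \widehat{\psi}(0^+,\tau)$ only if that limit exists, so one argues instead via $v(t;0) \equiv 1$ (the solution of \eqref{rel} with $\la=0$, $f=0$, $a$ replaced appropriately) combined with \eqref{sub2}. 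For $\int_0^\infty \ph(t,\tau)\,d\tau$: from $\widehat{\ph}(s,\tau) = \frac{g(s)}{s}e^{-\tau g(s)}$ we get $\int_0^\infty \widehat{\ph}(s,\tau)\,d\tau = \frac{1}{s}$, whose inverse Laplace transform is the constant $1$, giving $\int_0^\infty \ph(t,\tau)\,d\tau = 1$ for all $t>0$ after justifying the interchange. The main obstacle I anticipate is the careful justification that $e^{-\tau g(s)}$ is a genuine Laplace transform of an $L^1$-density (not merely a measure) in $t$ — this is where the full strength of $g\in\CBF$ and the limiting behaviour \eqref{g} must be used — together with the Fubini/Tonelli bookkeeping needed to pass the normalization identities from the Laplace domain back to the time domain.
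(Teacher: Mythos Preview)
Your overall strategy matches the paper's, but there is a concrete computational error in your identification of $\widehat{\psi}$. You write $\widehat{\psi}(s,\tau)=g(s)e^{-\tau g(s)}$ and claim this integrates against $e^{-\la\tau}$ to $1/(g(s)+\la)$; in fact
\[
\int_0^\infty g(s)\,e^{-\tau g(s)}e^{-\la\tau}\,d\tau=\frac{g(s)}{g(s)+\la},
\]
which is $s\widehat{u}(s;\la)$, not $\widehat{v}(s;\la)$. The correct choice, used in the paper, is simply $\widehat{\psi}(s,\tau)=e^{-\tau g(s)}$, and then $\int_0^\infty e^{-\tau g(s)}e^{-\la\tau}\,d\tau=1/(g(s)+\la)=\widehat{v}(s;\la)$ as required. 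This mistake is exactly what derails your normalization argument: with the correct $\widehat{\psi}$ one has $\int_0^\infty\psi(t,\tau)\,dt=\lim_{s\to 0^+}\widehat{\psi}(s,\tau)=e^{-\tau\cdot 0}=1$ immediately from $g(0^+)=0$ in \eqref{g}, which is precisely the one-line argument the paper gives. Your detour through ``$v(t;0)\equiv 1$'' is unnecessary (and in any case $v(t;0)=\ka(t)$, not $1$).

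The error also breaks your nonnegativity argument for $\ph$: since $\widehat{\ph}(s,\tau)=\dfrac{g(s)}{s}e^{-\tau g(s)}$ while the correct $\widehat{\psi}(s,\tau)=e^{-\tau g(s)}$, the relation $\widehat{\ph}=\widehat{\psi}/s$ fails, so $\ph(t,\tau)=\int_0^t\psi(r,\tau)\,dr$ is false. The paper instead argues directly that $\widehat{\ph}(s,\tau)\in\CMF$ as a product of the two completely monotone factors $g(s)/s$ and $e^{-\tau g(s)}$ (using $g\in\CBF\subset\BF$), and then invokes Bernstein's theorem. With these two corrections your proposal coincides with the paper's proof.
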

\begin{proof}
Let us define a function $\ph(t,\tau)$ by the identity
\begin{equation}\label{defphiint}
\ph(t,\tau)=\frac{1}{2\pi \mathrm{i}} \int_{\g-\mathrm{i}\infty}^{\g+\mathrm{i}\infty}e^{st-\tau g(s)}\,\frac{g(s)}{s}\,ds, \ \ \g,t,\tau>0;
\end{equation}
Then the Laplace transform of $\ph(t,\tau)$ with respect to $t$ 
is given by
\begin{equation}\label{phi}
\widehat{\ph}(s,\tau)=\int_0^\infty e^{-st}\ph (t,\tau)\, dt=\frac{g(s)}{s}e^{-\tau g(s)},\ \ s,\tau>0,
\end{equation}
If we define a function $u(t;\la)$ by (\ref{sub1}), then for its Laplace transform we obtain
$$
\int _0^\infty e^{-st}u(t;\la)\, dt=\int_0^\infty\widehat{\ph}(s,\tau)e^{-\la\tau}\, d\tau=\frac{g(s)}{s}\int_0^\infty e^{-\tau g(s)}e^{-\la\tau}\, d\tau=\frac{g(s)}{s(g(s)+\la)}.
$$
Comparing this result to (\ref{Lsol}), it follows by the uniqueness of the Laplace transform  that $u(t;\la)$ defined by (\ref{sub1}) and the fundamental solution of (\ref{rel}) coincide. In this way (\ref{sub1}) is established. 

For (\ref{sub2}) we define $\psi(t,\tau)$ by the identity
\begin{equation}\label{psi}
\widehat{\psi}(s,\tau)=\int_0^\infty e^{-st}\psi (t,\tau)\, dt=e^{-\tau g(s)},\ \ s,\tau>0,
\end{equation}
and use an analogous argument.

To prove that $\ph(t,\tau)$ and $\psi(t,\tau)$ are nonnegative, we use a standard argument based on properties (A)-(C) of Proposition~\ref{Appendix} in the Appendix. Since $g(s)\in\CBF$ then $g(s)/s\in\CMF$ and $e^{-\tau g(s)}\in \CMF$. 
Therefore, (\ref{phi}) and (\ref{psi}) imply that $\widehat{\psi}(s,\tau)\in\CMF$
and $\widehat{\ph}(s,\tau)\in\CMF$ as a product of two completely monotone functions and, hence, $\ph(t,\tau)\ge 0$ and $\psi(t,\tau)\ge 0$ by the Bernstein's theorem. 

The integral identity in (\ref{12}) for $\ph(t,\tau)$ can be obtained as a particular case of (\ref{sub1}) taking $\la = 0$  and noticing that the unique solution in this case is $u(t;0)\equiv 1$. 
An alternative proof follows from the definition (\ref{defphiint}) of $\phi(t,\tau)$:
\begin{eqnarray}
\int_0^\infty \phi(t,\tau){d}\tau&=&\frac{1}{2\pi \mathrm{i}} \int_{\g-\mathrm{i}\infty}^{\g+\mathrm{i}\infty}e^{st}\frac{g(s)}{s}\int_0^\infty\exp\left(-\tau g(s)\right)\,{d}\tau{d}s\nonumber\\
&=&\frac{1}{2\pi \mathrm{i}} \int_{\g-\mathrm{i}\infty}^{\g+\mathrm{i}\infty}\frac{e^{st}}{s}{d}s=1.\nonumber
\end{eqnarray}

The integral identity in (\ref{12}) for $\psi(t,\tau)$ follows taking $s\to 0$ in (\ref{psi}).
 \end{proof}

Subordination identities (\ref{sub1}) and (\ref{sub2}) can be also deduced as particular cases of the generalized multiplication theorem of Efros \cite{Ditkin}. This theorem is used e.g. in \cite{MMAS} to obtain integral representation of multivariate Mittag-Leffler functions. 

Let us note that more general subordination results for the case of $\DDk$-heat equation are given in \cite{KGeneral}. 
Stochastic interpretation is discussed in \cite{MM, Toaldo} in the context of stable subordinators. 
In this work we restrict ourselves to the scalar case ($\la$=constant) and consider the subordination identities (\ref{sub1}) and (\ref{sub2}) purely as useful representations of the solutions of the $\DDk$-relaxation equation.

Further, we use the following characterization of functions which are holomorphic in a sector,
see \cite{Pruss}, Theorem 0.1.

\begin{proposition}\label{Pruss}
Let $f$ be a function defined on $(0,\infty)$ and $\theta_0\in (0,\pi/2]$. Then the following are equivalent:\\
(i) $f(s)$ admits holomorphic extension to the sector $|\arg s|<\pi/2+\theta_0$ and $sf(s)$ is bounded on each sector $|\arg s|\le \pi/2+\theta$, $\theta<\theta_0$;\\
(ii) there is a function $v(t)$ holomorphic for $|\arg t|<\theta_0$ and bounded on each sector  $|\arg t|\le\theta<\theta_0$, such that $f(s)=\widehat{v}(s)$ for each $s>0$.
\end{proposition}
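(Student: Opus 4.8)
The statement is Theorem~0.1 of \cite{Pruss}, and I would reconstruct its proof as follows. Both implications come down to rotating the contour in a (direct or inverse) Laplace integral into the region where the exponential factor decays. For (ii)$\Rightarrow$(i) I would start from the bounded holomorphic function $v$ and define the extension of $f=\widehat v$ by integrating along a ray $t=re^{\mathrm{i}\omega}$, $r>0$, instead of along the positive real axis; for (i)$\Rightarrow$(ii) I would recover $v$ from $f$ by a Bromwich-type formula in which the vertical line is bent into the sector $|\arg s|<\pi/2+\theta_0$.

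For (ii)$\Rightarrow$(i): given $s$ with $|\arg s|<\pi/2+\theta_0$, write $s=|s|e^{\mathrm{i}\varphi}$; since $|\varphi|<\pi/2+\theta_0$ one can choose $\omega$ with $|\omega|<\theta_0$ and $|\varphi+\omega|<\pi/2$, so that $\mathrm{Re}(st)=r|s|\cos(\varphi+\omega)>0$ along the ray $t=re^{\mathrm{i}\omega}$. I would then set $f(s)=\int_0^\infty e^{-sre^{\mathrm{i}\omega}}v(re^{\mathrm{i}\omega})e^{\mathrm{i}\omega}\,dr$ and use Cauchy's theorem, together with the boundedness of $v$ on closed subsectors and the exponential decay of the integrand, to check that this value does not depend on the admissible choice of $\omega$; this makes $f$ holomorphic on $|\arg s|<\pi/2+\theta_0$ and equal to $\widehat v$ for $s>0$ (take $\omega=0$). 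If $|v|\le M$ on the relevant subsector, then $|f(s)|\le M/(|s|\cos(\varphi+\omega))$, and a choice of $\omega$ that is uniform on $|\arg s|\le\pi/2+\theta$ (with $\theta<\theta_0$) yields a bound $|sf(s)|\le C(\theta)$ there.

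For (i)$\Rightarrow$(ii): for $\theta\in(0,\theta_0)$ and $r_0>0$ I would take the contour $\Gamma_{r_0,\theta}$ made of the two rays $\arg s=\pm(\pi/2+\theta)$, $|s|\ge r_0$, joined by the circular arc $|s|=r_0$, $|\arg s|\le\pi/2+\theta$, oriented from $\infty e^{-\mathrm{i}(\pi/2+\theta)}$ to $\infty e^{\mathrm{i}(\pi/2+\theta)}$, and define $v(t)=\frac{1}{2\pi\mathrm{i}}\int_{\Gamma_{r_0,\theta}}e^{st}f(s)\,ds$. On the rays $\mathrm{Re}(st)=-|s||t|\sin(\theta\pm\arg t)$ is negative and linear in $|s|$ when $\arg t$ is bounded away from $\pm\theta$, while $|f(s)|\le C/|s|$ by hypothesis; hence the integral converges absolutely and defines $v$ holomorphic on $|\arg t|<\theta$, and Cauchy's theorem shows it is independent of $r_0$ and of $\theta\in(0,\theta_0)$, so $v$ is holomorphic on $|\arg t|<\theta_0$. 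To get boundedness on a subsector $|\arg t|\le\theta'<\theta_0$, I would pick $\theta\in(\theta',\theta_0)$ and choose the truncation radius $r_0=1/|t|$: the arc then contributes $O(1)$ and each ray contributes $\int_1^\infty e^{-u\sin(\theta-\theta')}\,\frac{C}{u}\,du=O(1)$, uniformly in $t$.

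Finally, to identify $f$ with $\widehat v$ I would take $s_0>r_0$, apply Fubini to $\widehat v(s_0)=\frac{1}{2\pi\mathrm{i}}\int_0^\infty e^{-s_0t}\int_{\Gamma_{r_0,\theta}}e^{st}f(s)\,ds\,dt$ (legitimate because $s_0-\mathrm{Re}(s)\ge|s|\sin\theta$ on the rays makes the double integral absolutely convergent), obtaining $\frac{1}{2\pi\mathrm{i}}\int_{\Gamma_{r_0,\theta}}\frac{f(s)}{s_0-s}\,ds$, and then close the contour to the right; the added large arc is negligible since the integrand is $O(|s|^{-2})$, so a residue computation gives $\widehat v(s_0)=f(s_0)$, and analytic continuation extends this to all $s>0$. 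The one genuinely delicate point is the direction (i)$\Rightarrow$(ii): since the hypothesis only gives $|f(s)|=O(1/|s|)$, the Bromwich integral on a vertical line fails to converge absolutely, so the contour must be bent into the sector, and, more importantly, obtaining the \emph{uniform} boundedness of $v$ on subsectors (rather than mere holomorphy) forces one to tie the truncation radius to $|t|$ via $r_0\sim1/|t|$.
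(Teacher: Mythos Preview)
The paper does not actually prove this proposition: it is quoted verbatim as Theorem~0.1 of \cite{Pruss} and used as a black box, with no argument supplied. Your reconstruction of the proof---rotating the Laplace integral onto a ray $t=re^{\mathrm{i}\omega}$ for (ii)$\Rightarrow$(i), and using a bent Bromwich contour $\Gamma_{r_0,\theta}$ with the scaling $r_0\sim 1/|t|$ for (i)$\Rightarrow$(ii)---is correct and is precisely the standard argument one finds in Pr\"uss's monograph, so there is nothing to compare.
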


In the next theorem, further useful properties of the functions $u(t;\la), v(t;\la)$ are summarized, leading to the central result of this work, estimates (\ref{est}). 

\begin{theorem}\label{T}
For any $\la> 0$ the functions $u(t;\la)$ and $v(t;\la)$ admit holomorphic extensions to the half-plane $\Re \,t>0$ and 
\begin{eqnarray}
&& u(t;\la), v(t;\la)\in\CMF \ {\mbox in} \ t> 0; \label{cm}\\
			&&u(0;\la)=1; \ \  0<u(t;\la)<1, \ \ v(t;\la)>0,\ \ \ t>0;\label{p2}\\
			&&\frac{\partial}{\partial t}u(t;\la)=-\la v(t;\la).\label{p1}
	\end{eqnarray}
	For any $\la\ge\la_1>0$ and $t>0$
	\begin{equation}\label{p3}
			u(t;\la)\le u(t;\la_1), \ \ v(t;\la)\le v(t;\la_1),
	\end{equation}
	and 
	\begin{equation}\label{est}
	{C}\le\la\int_0^T v(t;\la)\,dt<1, \ \ T>0,
	\end{equation}
	where the constant $C=1-u(T;\la_1)>0$ is independent of $\la$.
	\end{theorem}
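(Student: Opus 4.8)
The plan is to establish the statements in a natural logical order, with the subordination identities (\ref{sub1})--(\ref{sub2}) as the foundational input. First I would derive the holomorphic extension and complete monotonicity (\ref{cm}). For complete monotonicity, note that from (\ref{Lsol}) both $\widehat{u}(s;\la)$ and $\widehat{v}(s;\la)$ are built from $g(s)\in\CBF$; since $g(s)+\la\in\CBF$, its reciprocal $1/(g(s)+\la)\in\CMF$, and $g(s)/(s(g(s)+\la)) = (1/s) - \la/(s(g(s)+\la))$, whose Laplace inversion is completely monotone via the standard CMF/CBF calculus of Proposition~\ref{Appendix}. Bernstein's theorem then gives $u(\cdot;\la), v(\cdot;\la)\in\CMF$. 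For the holomorphic extension to $\Re t>0$, I would apply Proposition~\ref{Pruss}: one checks that $\widehat{u}(s;\la)$ and $\widehat{v}(s;\la)$ extend holomorphically to every sector $|\arg s|<\pi/2+\theta$ with $\theta<\pi/2$ (using that $g\in\CBF$ is holomorphic off the negative real axis and maps the right half-plane into itself, so $g(s)+\la$ stays away from zero) and that $s\widehat{u}$, $s\widehat{v}$ remain bounded on proper subsectors; the proposition then yields holomorphy in the full half-plane $\Re t>0$.

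Next I would record (\ref{p1}): differentiating the Laplace representation, $\widehat{\partial_t u}(s;\la) = s\widehat{u}(s;\la) - u(0;\la)$, and since from the subordination identity $u(0;\la) = \int_0^\infty \phi(0,\tau)e^{-\la\tau}d\tau$ with $u(t;0)\equiv 1$ forcing $u(0;\la)=1$ (or directly from $s\widehat u(s;\la)\to 1$ as $s\to\infty$), one gets $s\widehat u(s;\la) - 1 = -\la\widehat v(s;\la)$ by (\ref{Lsol}), hence $\partial_t u = -\la v$. The two-sided bound $0<u(t;\la)<1$ in (\ref{p2}) follows from the subordination identity (\ref{sub1}): $\phi(t,\tau)\ge 0$ and $\int_0^\infty\phi(t,\tau)\,d\tau=1$ give $u(t;\la)=\int_0^\infty \phi(t,\tau)e^{-\la\tau}d\tau < \int_0^\infty\phi(t,\tau)\,d\tau=1$ for $\la>0$ (strictness because $e^{-\la\tau}<1$ on a set of positive $\phi$-measure), and $u(t;\la)>0$ since the integrand is nonnegative and not identically zero; $v(t;\la)>0$ follows the same way from (\ref{sub2}). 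The monotonicity in $\la$, (\ref{p3}), is immediate from the subordination identities too: for fixed $t$, $\la\mapsto e^{-\la\tau}$ is decreasing, so integrating against the nonnegative kernels $\phi(t,\tau)$, $\psi(t,\tau)$ preserves the inequality.

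Finally, the central estimate (\ref{est}). Using (\ref{p1}) and $u(0;\la)=1$,
\begin{equation*}
\la\int_0^T v(t;\la)\,dt = -\int_0^T \frac{\partial}{\partial t}u(t;\la)\,dt = 1 - u(T;\la).
\end{equation*}
The upper bound $\la\int_0^T v(t;\la)\,dt = 1-u(T;\la)<1$ is then exactly the positivity $u(T;\la)>0$ from (\ref{p2}). For the lower bound, apply the monotonicity (\ref{p3}): $u(T;\la)\le u(T;\la_1)$ whenever $\la\ge\la_1$, hence $1-u(T;\la)\ge 1-u(T;\la_1)=:C$, and $C>0$ since $u(T;\la_1)<1$ by (\ref{p2}). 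This $C$ depends only on $T$ and $\la_1$ (through the fixed kernel $g$), not on $\la$, completing the proof.

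I expect the main obstacle to be the holomorphic-extension step: verifying the precise hypotheses of Proposition~\ref{Pruss}, namely that $\widehat u(s;\la)$ and $\widehat v(s;\la)$ extend to sectors of opening up to $\pi$ about the positive axis and that $s\widehat u$, $s\widehat v$ are bounded on every proper subsector uniformly. This hinges on the sectorial mapping properties of complete Bernstein functions — that $g$ extends holomorphically to $\CC\setminus(-\infty,0]$ and maps each sector $|\arg s|<\pi/2+\theta$ into a sector of the same opening inside the right half-plane, so that $g(s)+\la$ is bounded away from $0$ and comparable to $g(s)$ there — together with the limiting behaviour (\ref{g}) of $g$ near $0$ and $\infty$. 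Once this sectorial estimate is in hand, everything else is a straightforward consequence of the subordination identities and the fundamental-theorem-of-calculus identity above.
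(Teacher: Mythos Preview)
Your proposal is correct and follows the same overall architecture as the paper: the subordination identities drive the monotonicity in $\la$, the Laplace-transform identity gives $\partial_t u=-\la v$, and the fundamental-theorem-of-calculus computation $\la\int_0^T v\,dt=1-u(T;\la)$ combined with (\ref{p3}) and (\ref{p2}) yields (\ref{est}) exactly as in the paper.

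There are a few places where your route diverges from the paper's in detail. For complete monotonicity, the paper simply cites Kochubei for $u\in\CMF$ and then deduces $v\in\CMF$ from (\ref{p1}); your direct CMF/CBF argument is reasonable, but note that the decomposition $\widehat u=1/s-\la/(s(g+\la))$ does not by itself exhibit $u$ as CMF (the second term inverts to a Bernstein function, not a CMF one). A cleaner direct route is: $g+\la\in\CBF$ implies $\widehat v=1/(g+\la)\in\SF$, hence $v\in\CMF$; then $u\ge 0$ (from subordination) together with $u'=-\la v$ gives $u\in\CMF$.

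For the strict inequalities $0<u(t;\la)<1$ and $v(t;\la)>0$, the paper argues via analyticity rather than subordination: CMF gives $u$ nonincreasing and nonnegative, and if $u(\tau;\la)=1$ (resp.\ $=0$) at some $\tau>0$ then $u\equiv 1$ (resp.\ $\equiv 0$) on an interval, hence everywhere by analyticity, a contradiction. Your subordination argument also works, provided you note that $\ph(t,\cdot)$ is a genuine $L^1$ density and not a point mass at $\tau=0$.

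The one place where the paper's argument is genuinely slicker is the holomorphic extension. Rather than verifying sectorial bounds on $g$ directly, the paper first applies Proposition~\ref{Pruss} in the direction (ii)$\Rightarrow$(i) to the explicit function $e^{-\la t}$, obtaining $|s/(s+\la)|\le M$ on every sector $|\arg s|\le\theta<\pi$. It then invokes property~(D) of the Appendix, applied to the CMF kernel $\ka$ with $g=1/\widehat\ka$, to get $|\arg g(s)|\le|\arg s|$; substituting $s\mapsto g(s)$ in the previous bound immediately gives $|s\widehat u(s;\la)|=|g(s)/(g(s)+\la)|\le M$ on the same sectors. This sidesteps the direct sectorial estimates you flagged as the main obstacle.
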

	\begin{proof}
	First, applying Proposition~\ref{Pruss}, we prove that functions $u(t;\la)$ admits holomorphic extensions to the half-plane $\Re t>0$. Since the function $v(t)=e^{-\la t}$ is holomorphic for all $t\in \CC\backslash[0,\infty)$ and bounded for $\Re t>0$, then, using that (ii) implies (i), it follows that the Laplace transform $\widehat{v}(s)=\frac{1}{s+\la}$ admits holomorphic extension to the sector $|\arg s|<\pi$ and 
	\begin{equation}\label{b1}
	\left|s\widehat{v}(s)\right|=\left|\frac{s}{s+\la}\right|\le M,\ |\arg s|\le \theta, \theta<\pi.
	\end{equation}
	Since $g(s)\in\CBF$ this function admits holomorphic extension to the sector $|\arg s|<\pi$ and therefore, in view of (\ref{Lsol}), this will hold also for $\widehat{u}(s;\la)$. 
	Consider the kernel $\ka(t)$ defined by the identity $\widehat{\ka}(s)\widehat{k}(s)=1/s$. It is proven in \cite{KGeneral} that $\ka(t)\in\CMF$. Since $g(s)=1/\widehat{\ka}(s)$, property (D) of Proposition~\ref{Appendix} implies 
	$$
	|\arg g(s)|\le |\arg s|,\ \ s\in \CC\backslash (-\infty,0],
	$$
	which together with (\ref{b1}) gives  
	$$
	\left|s\widehat{u}(s;\la)\right |=\left|\frac{g(s)}{g(s)+\la}\right |\le M,\ |\arg s|\le \theta, \theta<\pi.
	$$
	From implication (i)$\Rightarrow$ (ii) in Proposition~\ref{Pruss}, it follows that function $u(t;\la)$ admits holomorphic extensions for $\Re t>0$. The analyticity of $v(t;\la)$ is then inferred taking into account relation (\ref{p1}). 
	
	Complete monotonicity of $u(t;\la)$ is proven in \cite{KGeneral}, Theorem~2. Then, relation (\ref{p1}) imply also $v(t;\la)\in\CMF$.
	
	Applying the property of Laplace transform $f(0)=\lim_{s\to\infty} s\widehat {f}(s)$ and (\ref{Lsol}) it follows that $u(0;\la)=1$. Since $u(t;\la), v(t;\la) \in \CMF$, they are nonnegative and nonincreasing functions for $t>0$. In fact, these functions are positive and strictly decreasing. This follows from their analyticity. Indeed, if $u(\tau;\la)=1$ for some $\tau>0$ then $u(t;\la)=1$ for all $t\in[0,\tau]$ and therefore, from the analyticity of $u(t;\la)$ it should be equal to $1$ for all $t\ge 0$, which is a contradiction. Analogous argument shows that $u(t;\la), v(t;\la)\neq 0$ for all $t\ge 0$.
	
	From (\ref{Lsol}) and $u(0;\la)=1$ we deduce
	$$
	\widehat{\frac{\partial u}{\partial t}}(s,\la)=\frac{g(s)}{g(s)+\la}-1=-\frac{\la}{g(s)+\la}=-\la\widehat{v}(s,\la).
	$$
	Identity (\ref{p1}) then follows from the uniqueness property of the Laplace transform. 
	
	The inequalities (\ref{p3}) follow directly from the subordination identities (\ref{sub1}) and (\ref{sub2}).   Indeed, for $\la\ge\la_1$ 
	$$
	u(t;\la)=\int_0^\infty \phi(t,\tau)e^{-\la \tau}\, {d}\tau\le \int_0^\infty \phi(t,\tau)e^{-\la_1 \tau}\, {d}\tau=u(t;\la_1), 
	$$
	and analogously for $v(t;\la)$. Here the nonnegativity property of the function $\phi(t,\tau)$ (resp. $\psi(t,\tau)$) is essential.
	
	Applying (\ref{p1}) we deduce
	$$
	\la\int_0^T v(t;\la)\,dt=1-u(T;\la).
	$$
	This together with the first inequality in (\ref{p3}) and $0<u(T;\la)<1$ implies (\ref{est}).
		\end{proof}

    Let us note that all statements in Theorem~\ref{T} extend known properties of Mittag-Leffler functions. Some of them are also reported for fractional multi-term and distributed-order equations \cite{K,Yamultiterm}. Analyticity of $u(t;\la)$ is established for distributed-order equations with continuous weight function in \cite{LuchkoCAMWA}. The upper bound in (\ref{est}) is given in the multi-term case  
		in \cite{CAA}.

\section{$\DDk$-diffusion equation }

In this section we use the estimates (\ref{est}) to study the general $\DDk$-diffusion equation on a bounded domain. It is obtained from the classical diffusion equation replacing the first order time-derivative by the integro-differential operator $\DDk$ defined in (\ref{DDk}).

Let $\Omega\subset\RR^d$ be a bounded domain with sufficiently smooth boundary $\partial\Omega$, and $T>0$. 
Consider the initial-boundary-value problem 
\begin{equation}\label{p}
  \begin{aligned}
\DD_{(k)}u(x,t)&=\Delta u(x,t)+F(x,t),\ \ x\in \Omega,\ \ t\in(0,T),\\ 
u(x,t)&=0,\ \  x\in\partial\Omega, \ t\in(0,T),\\ 
u(x,0)&=a(x),\ \ x\in\Omega, 
  \end{aligned}
\end{equation}
where the operator $\DD_{(k)}$ acts with respect to time variable and $\De$ is the Laplace operator acting on space variables.


Next we give an application of the estimates (\ref{est}) for obtaining a regularity estimate for the inhomogeneous direct problem (\ref{p}) and for the study of uniqueness and conditional stability of a simple inverse source problem with final overdetermination.

\subsection{Direct problem}

Define the Laplace operator $\De$ in $L^2(\Om)$ with domain $D(\De)= H_0^1(\Omega) \cap H^2(\Omega)$, where $H_0^1(\Omega)$ and $H^2(\Omega)$ denote Sobolev spaces.
Denote by $\{-\la_n,\phh_n\}_{n=1}^\infty$ the corresponding eigensystem. Then $0<\la_1\le\la_2\le...,\ \la_n\to\infty$ as $n\to\infty$, and $ \{\phh_n\}_{n=1}^\infty$ form an orthonormal basis of $L^2(\Om)$.

Applying eigenfunction decomposition, we obtain from (\ref{sol}) the following formal representation of the solution of problem (\ref{p}) 
\begin{equation}\label{FS}
u(x,t)=\sum_{n=1}^\infty a_n u_n(t)\phh_n(x)+\sum_{n=1}^\infty\left(\int_0^t v_n(t-\tau)F_n(\tau)\,d\tau \right)\phh_n(x)
\end{equation}
where $u_n(t)=u(t;\la_n)$, $v_n(t)=v(t;\la_n)$ are the fundamental and impulse-response solution of the relaxation equation~(\ref{rel}) with $\la=\la_n$, $n\in\NN$, and
$$
a_n=(a,\phh_n), \ \ F_n(t)=(F(.,t),\phh_n),\ \ n\in\NN,
$$
with $(.,.)$ denoting the inner product in $L^2(\Om)$.

 Let us consider the 
case $a= 0$ and $F\neq 0$, $F\in L^2(0,T;L^2(\Om))$. 
We use the upper bound in (\ref{est}) 
to prove that the solution $u(x,t)$ satisfies the estimate:
\begin{equation}\label{est1}
\|\De u\|_{L^2(0,T;L^2(\Om))}\le \|F\|_{L^2(0,T;L^2(\Om))}.
\end{equation}
The norm in the space $L^2(0,T;L^2(\Om))$ of a function $f(x,t)$ is given by:
$$\|f\|^2_{L^2(0,T;L^2(\Om))}=\int_0^T \|f\|^2_{L^2(\Om)}\, dt=\int_0^T \sum_{n=1}^\infty |f_n(t)|^2\, dt=\sum_{n=1}^\infty \|f_n(t)\|^2_{L^2(0,T)}.$$
Applying the Young inequality for the convolution and the upper bound in (\ref{est}) 
it follows:
\begin{equation*}
\left\|\int_0^t v_n (t-\tau)F_n(\tau)\,d\tau\right\|_{L^2(0,T)}^2
\le 
\left(\int_0^T v_n(t)\,dt\right)^2 \int_0^T |F_n(t)|^2\,dt
\le \frac{1}{\la_n^2} \int_0^T |F_n(t)|^2\,dt.
\end{equation*}
Inserting this estimate in the spectral decomposition (\ref{FS}) we get (\ref{est1}):
\begin{eqnarray}
\| \De u\|_{L^2(0,T;L^2(\Om))}^2&\le& \sum_{n=1}^{\infty}\la_n^2\left\|\int_0^t v_n(t-\tau)F_n(\tau)\,d\tau\right\|_{L^2(0,T)}^2\nonumber\\
&\le& \sum_{n=1}^{\infty}\int_0^T |F_n(t)|^2\,dt= \|F\|_{L^2(0,T;L^2(\Om))}^2.\nonumber
\end{eqnarray}

\subsection{Inverse problem: uniqueness and a conditional stability estimate }

For the inverse source problem for the $\DDk$-diffusion equation we take $a=0$ and $F(x,t)=f(x)q(t)$ in (\ref{p}), where the function $q\in C[0,T]$ is known and satisfies $q(t)\ge q_0>0$ for all $t\in [0,T]$. The problem is to determine $f(x)$ and $u(x,t)$, 
satisfying (\ref{p}) and the additional condition
\begin{equation}\label{h}
u(x,T)=h(x),\ x\in\overline{\Om}.
\end{equation}
Taking $t=T$ in the formal expansion (\ref{FS})  of the solution of (\ref{p}) we obtain
 \begin{equation}\label{hh}
h(x)=\sum_{n=1}^\infty f_n\left(\int_0^T v_n(T-\tau)q(\tau)\,d\tau \right)\phh_n(x), 
\end{equation}
where $f_n=(f,\phh_n)$. 
Introducing the notations $h_n=(h,\phh_n)$ and $Q_n(t)=\int_0^t v_n(t-\tau)q(\tau)\,d\tau$, (\ref{hh}) gives 
\begin{equation}\label{fh}
h_n=f_n Q_n(T).
\end{equation}
 Since $Q_n(T)\ge q_0\int_0^T v_n(\tau)\,d\tau$ and $Q_n(T)\le \|q\|_{C[0,T]}\int_0^T v_n(\tau)\,d\tau$, the bounds in (\ref{est}) imply
\begin{equation}\label{Q}
0<\underline{C}/\la_n\le Q_n(T)\le \overline{C}/\la_n,
\end{equation}
where the constants $\underline{C}$ and $\overline{C}$ do not depend on $n$.

In particular, $Q_n(T)>0$. This implies that the solution $\{f(x), u(x,t)\}$ of problem (\ref{p}), (\ref{h}) is unique. Indeed, if $h(x)=0$ then $f(x)=0$ by (\ref{fh}) and from the uniqueness of the direct problem, also $u(x,t)=0$.

Estimates (\ref{Q}) for $Q_n(T)$ and (\ref{fh}) show that $h(x)$ has a better regularity than $f(x)$. More precisely,
\begin{equation}\label{reg}
\underline{C}\|f\|_{L^2(\Om)}\le \|h\|_{H^2(\Om)}\le \overline{C}\|f\|_{L^2(\Om)}.
\end{equation}
Therefore, the inverse problem is moderately ill posed.

The lower bound in (\ref{Q}) can be used to prove the following conditional stability result: If $\|f\|_{H^2(\Om)}\le E$ then 
\begin{equation}\label{cs}
\|f\|_{L^2(\Om)}\le \underline{C}^{-1/2} E^{1/2} \|h\|^{1/2}_{L^2(\Om)}.
\end{equation}
Indeed, (\ref{fh}) and the Cauchy-Schwarz inequality yield
\begin{equation}\label{ll}
\|  f\|_{L^2(\Om)}^2= \sum_{n=1}^{\infty}f_n^2=
\sum_{n=1}^{\infty}\frac{h_n}{Q_n^2(T)}h_n
\le\left(\sum_{n=1}^{\infty}\frac{h_n^2}{Q_n^4(T)}\right)^{\frac12}\|  h\|_{L^2(\Om)}.
\end{equation}
Applying (\ref{Q}), the obtained sum is bounded as follows 
$$
\sum_{n=1}^{\infty}\frac{h_n^2}{Q_n^4(T)}=\sum_{n=1}^{\infty}\frac{f_n^2}{Q_n^2(T)}\le \underline{C}^{-2}\sum_{n=1}^{\infty}\la_n^2f_n^2\le \underline{C}^{-2}\|f\|^2_{H^2(\Om)}\le \underline{C}^{-2} E^2.
$$
Plugging this estimate in  (\ref{ll}) completes the proof of (\ref{cs}).

\section{Concluding remarks}

Relaxation and diffusion equations associated with a general convolutional derivative in time are studied. Estimates for the solution of the relaxation equation are established and applied to study the diffusion equation on a bounded space domain via eigenfunction expansion. A regularity estimate for an inhomogeneous direct problem is proven, as well as uniqueness and a conditional stability for an inverse source problem with final overdetermination.

The established estimates can serve as a basis for establishing  more elaborate regularity results, which extend the results in \cite{SaYa} for the single-term fractional diffusion equation. 
Another feasible generalization is to apply the developed method to a diffusion equation with nonlocal boundary conditions, as the one studied in \cite{ITSF0}. 

\section*{Acknowledgements}
This work is performed in the frames of the bilateral research project between Bulgarian and Serbian academies of sciences "Analytical and numerical methods for differential and integral equations and mathematical models of arbitrary (fractional or high integer) order".

\section*{Appendix}
Here we give some notations, definitions and properties of special classes of functions related to completely monotone functions, see (\ref{cmfd}). 


The characterization of the class of completely monotone functions ($\CMF$) is given by the Bernstein's theorem which states that
a function is completely monotone if and only if it can be represented as the Laplace transform of a non-negative measure (non-negative function or generalized function).

The class of Stieltjes functions ($\SF$) consists of all functions defined on $(0,\infty)$ which 
have the representation
$$
\phh(\la)=\frac{a}{\la}+b+\int_0^\infty e^{-\la \tau}\psi(\tau)\,d\tau,
$$
where $a,b\ge 0$, $\psi\in \CMF$ and the Laplace transform of $\psi$ exists for any $\la>0$.
Obviously, $\SF \subset\CMF$. 

A non-negative function $\phh$ on $(0,\infty)$ is said to be a Bernstein function ($\phh\in \BF$) if 
$\phh'(\la)\in \CMF$; $\phh(\la)$ is said to be a complete Bernstein functions ($\CBF$) if and only if $\phh(\la)/\la\in \SF$. 
We have the inclusion $\CBF\subset \BF.$ 

Basic examples of Stieltjes and complete Bernstein functions are the following: if $\al\in[0,1]$ then $\la^{-\al}\in \SF$ and $\la^{\al}\in\CBF
.$

 \begin{proposition}\label{Appendix}
 The following properties are satisfied:\\
{\rm(A)} 
The class $\CMF$ is closed under point-wise addition and multiplication.\\
{\rm(B)} If $\phh\in \BF$ then $\phh(\la)/\la\in \CMF$.\\
{\rm(C)} If $\phh\in \CMF$ and $\psi\in \BF$ then the composite function $\phh(\psi)\in\CMF$.\\
{\rm(D)} If $k\in L^1_{loc}(\RR_+)$ and $k\in\CMF$ then the function $\widehat{k}(\la)$ can be analytically extended to $\CC\backslash (-\infty,0]$ and $$|\arg \widehat{k}(\la)|\le |\arg \la|,\ \la\in \CC\backslash (-\infty,0].$$
\end{proposition}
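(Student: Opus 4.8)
The plan is to derive all four assertions from the Bernstein-type integral representations of the function classes at hand, proving them in the order \emph{(A)}, \emph{(C)}, \emph{(B)}, \emph{(D)} so that later parts may invoke earlier ones; I expect \emph{(C)} to be the crux. \emph{(A).} Closure under pointwise addition is immediate from the defining inequalities~(\ref{cmfd}), since differentiation is linear and the factor $(-1)^n$ is common to the two summands. For products one applies the Leibniz rule and regroups signs, $(-1)^n(fg)^{(n)}=\sum_{k=0}^n\binom{n}{k}\big((-1)^kf^{(k)}\big)\big((-1)^{n-k}g^{(n-k)}\big)$, a sum of products of nonnegative terms. The closure of $\CMF$ under nonnegative superpositions (integrals against a nonnegative measure) used below follows either from Bernstein's theorem or directly from~(\ref{cmfd}) by differentiating under the integral sign.

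\emph{(C).} I would first establish the lemma that $\psi\in\BF$ implies $e^{-u\psi}\in\CMF$ for every $u\ge 0$. This goes by induction on the order of the derivative: writing $-(e^{-u\psi})'=u\psi'e^{-u\psi}$, expanding the $n$-th derivative of the right-hand side by Leibniz, and combining $\psi'\in\CMF$ with the induction hypothesis via the sign regrouping of \emph{(A)} gives $(-1)^{n+1}(e^{-u\psi})^{(n+1)}\ge 0$. Granting the lemma, represent $\phh\in\CMF$ by Bernstein's theorem as $\phh(\la)=\int_0^\infty e^{-u\la}\,m(du)$ with $m\ge 0$; then $\phh(\psi(\la))=\int_0^\infty e^{-u\psi(\la)}\,m(du)$ is a nonnegative superposition of completely monotone functions of $\la$, hence lies in $\CMF$. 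Alternatively \emph{(C)} can be read off the Fa\`a di Bruno formula, but the route through the lemma is cleaner; this is the step I expect to be the main obstacle, the remaining parts reducing to it or to comparable manipulations.

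\emph{(B).} Using the L\'evy representation $\phh(\la)=\phh(0^+)+b\la+\int_{(0,\infty)}(1-e^{-\la t})\,\mu(dt)$ of a Bernstein function, with $\phh(0^+),b\ge 0$ and $\mu\ge 0$, divide by $\la$ and note that $\frac{1-e^{-\la t}}{\la}=\int_0^t e^{-\la s}\,ds$ is a Laplace transform of a nonnegative function, hence in $\CMF$ for each fixed $t>0$; this gives $\phh(\la)/\la=\phh(0^+)/\la+b+\int_{(0,\infty)}\big(\int_0^t e^{-\la s}\,ds\big)\mu(dt)\in\CMF$. More elementarily, the identity $\phh(\la)=\phh(0^+)+\la\int_0^1\phh'(\la r)\,dr$ yields $\phh(\la)/\la=\phh(0^+)/\la+\int_0^1\phh'(\la r)\,dr$, where each $\phh'(\la r)\in\CMF$ by \emph{(C)} since $\la\mapsto\la r$ is a Bernstein function and $\phh'\in\CMF$, and integrating in $r$ stays in $\CMF$.

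\emph{(D).} By Bernstein's theorem $k(t)=\int_0^\infty e^{-t\sigma}\,\nu(d\sigma)$ with $\nu\ge 0$; since $k\in L^1_{loc}$, $\int_0^1 k(t)\,dt=\int_0^\infty\frac{1-e^{-\sigma}}{\sigma}\,\nu(d\sigma)<\infty$, so $(1+\sigma)^{-1}$ is $\nu$-integrable and Tonelli's theorem gives $\widehat{k}(s)=\int_0^\infty(s+\sigma)^{-1}\,\nu(d\sigma)$ for $s>0$. The last integral defines a holomorphic function on $\CC\backslash(-\infty,0]$, because on each compact subset off the cut $|(\la+\sigma)^{-1}|$ is dominated by a constant multiple of the $\nu$-integrable function $(1+\sigma)^{-1}$; this is the asserted analytic extension. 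For the argument bound, fix $\la$ with $0<\arg\la<\pi$: as $\sigma$ runs over $[0,\infty)$ the point $\la+\sigma$ moves along a horizontal ray with fixed positive imaginary part, so $\arg(\la+\sigma)$ decreases monotonically from $\arg\la$ to $0$, whence $(\la+\sigma)^{-1}$ lies in the closed convex cone $\{z:-\arg\la\le\arg z\le 0\}$. An integral of points of a convex cone against a nonnegative measure remains in that cone, so $|\arg\widehat{k}(\la)|\le\arg\la=|\arg\la|$ when $k\not\equiv 0$; the case $\arg\la<0$ is the complex conjugate of this, and $\arg\la=0$ is trivial. The mildly delicate points are the uniform domination needed for holomorphy and the convexity of the cone, which requires $|\arg\la|<\pi$; both are routine.
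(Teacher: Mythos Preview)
Your arguments for all four parts are correct and follow the standard lines one finds in the monographs on Bernstein functions. Note, however, that the paper does \emph{not} supply its own proof of this proposition: it simply states the properties in the Appendix and refers the reader to \cite{CMF} and \cite{Pruss} (and specifically to \cite{Pruss}, Example~2.1, for part~(D)). So there is no ``paper's proof'' to compare against; you have provided what the paper deliberately outsourced to the literature. Your route through the lemma $e^{-u\psi}\in\CMF$ for (C), the L\'evy--Khintchine representation for (B), and the Stieltjes-type integral $\widehat{k}(\la)=\int_0^\infty(\la+\sigma)^{-1}\,\nu(d\sigma)$ together with the convex-cone argument for (D) are exactly the proofs one finds in those references, so your write-up is in effect a faithful expansion of the citations.
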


For more details on these special classes of functions and proofs of the properties we refer to \cite{CMF} and \cite{Pruss}. For the proof of (D) see \cite{Pruss}, Example 2.1.

\bibliographystyle{abbrv}
\bibliography{mybibMMAS}

\end{document}